\theoremstyle{plain}
\newtheorem{theorem}{Theorem}[section]
\newtheorem{lemma}[theorem]{Lemma}
\newtheorem{proposition}[theorem]{Proposition}
\newtheorem{corollary}[theorem]{Corollary}
\newtheorem{Counter-example}[theorem]{Counter-example}
\newtheorem{remark}[theorem]{Remark}
\theoremstyle{definition}
\theoremstyle{remark}
\long\def\symbolfootnote[#1]#2{\begingroup\def\thefootnote{\fnsymbol{footnote}}
\footnote[#1]{#2}\endgroup}
\begin{document}

\def\Q{\mathbb Q}
\def\R{\mathbb R}
\def\N{\mathbb N}
\def\Z{\mathbb Z}
\def\C{\mathbb C}
\def\S{\mathbb S}
\def\L{\mathbb L}
\def\H{\mathbb H}
\def\K{\mathbb K}
\def\X{\mathbb X}
\def\Y{\mathbb Y}
\def\Z{\mathbb Z}
\def\E{\mathbb E}
\def\J{\mathbb J}
\def\I{\mathbb I}
\def\T{\mathbb T}
\def\H{\mathbb H}

\title{Exploring new extrinsic upper bounds on the first eigenvalue of the Laplace operator for compact submanifolds in Euclidean spaces}

\author{Francisco J. Palomo$^*$}
\address{
Departamento de Matem\'{a}tica Aplicada, 
Universidad de M\'{a}laga,  29071-M\'{a}laga (Spain)}
\email{fpalomo@uma.es}

\author{Alfonso Romero}
\address{
Departamento de Geometr\'{i}a y Topolog\'{i}a, Universidad de Granada, 18071-Granada (Spain)}
\email{aromero@ugr.es}

\keywords{Compact submanifold, first eigenvalue of the Laplace operator, mean curvature vector field.}

\subjclass[2020]{Primary 53C40, 35P15. Secondary 53C42, 58J50.}

\date{}

\symbolfootnote[ 0 ]{ 
$^*$ Corresponding author.
}

\begin{abstract} Upper bounds of the first non-trivial eigenvalue $\lambda_1$ of the Laplace operator of a compact submanifold $M^n$ of Euclidean space $\R^{m+1}$, by means of a new technique,  are obtained. Each of the upper bounds of $\lambda_1$ depends on the length  of mean curvature vector field, the dimension $n$, the volume of $M^n$, and of a vector  of $\R^{m+1}$. 
When $M^n$ does not lie minimally in a hypersphere of $\R^{m+1}$, classical Reilly's inequality  \cite{Re} is improved and new upper bounds are explicitly computed.  For instance, considering a torus of revolution whose generating circle has a radius of $1$ and is centered at distance $\sqrt{2}$ from the axis of revolution, we find  $\lambda_1 < \frac{4}{3}(\sqrt{2}-1)\approx 0.552284$, whereas Reilly's upper bound gives $\lambda_1 < 1/\sqrt{2}\approx 0.707106$.

\end{abstract}
\textit{\textbf{}}
\maketitle

\markboth{}{}

\thispagestyle{empty}

\hyphenation{Lo-rent-zi-an}

\section{Introduction}

This paper is motivated by the following well-known Reilly's result \cite[Main Theorem]{Re},

\vspace{1mm}

\begin{quote}{\it
{\bf Theorem (Reilly).} Let $\psi:M^{n}\rightarrow \R^{m+1}$, $n\leq m$, be a (connected) compact $n$-dimensional submanifold $M^n$ in Euclidean space $\R^{m+1}$.
The first (non-trivial) eigenvalue $\lambda_1$ of the Laplace operator of the induced metric on $M^n$ satisfies  
$$
\hspace*{-5cm}\mathrm{(Re)} \hspace{36mm}\lambda_1 \leq n\,\frac{\displaystyle{\int}_{M^n} \|\mathbf{H}\|^2 \,dV}{\mathrm{vol}(M^{n})}=: B(\psi)\,,
$$
where $\mathbf{H}$ is the mean curvature vector field of $\psi$,  ${\mathrm{vol}(M^{n})}$ is the volume of $M^n$ and $dV$ denotes the canonical measure of the induced metric. The equality holds if and only if $\psi(M^n)$ lies minimally in some hypersphere in $\R^{m+1}$.
 }
\end{quote}

\noindent Although, the Laplace operator and its first non-trivial eigenvalue $\lambda_{1}$ for a Riemannian manifold $M^n$ are intrinsic objects (see \cite{BGM}, for instance), upper bounds for $\lambda_{1}$ can be computed from extrinsic quantities relative to some isometric immersion in Euclidean space as Reilly's result shows. 
His estimate improved the earlier one  \cite[Thm. 1]{BW} by Bleecker and Weiner,  
$$
\lambda_1 \leq n\,\frac{\displaystyle{\int}_{M^n} \| \mathrm{II} \|^2 \,dV}{\mathrm{vol}(M^{n})},
$$
where $\mathrm{II}$ denotes the second fundamental form. The equality  characterizes the spheres of constant sectional curvature.

\vspace{1mm}

Observe that the upper bound for  $\lambda_1$ in Reilly's  Theorem works for all congruent submanifolds to $\psi:M^{n}\rightarrow \R^{m+1}$ and the result 
is optimal in the sense that the equality gives a characterization of a relevant property of the submanifold. When the submanifold does not lie minimally in a hypersphere in $\R^{m+1}$, inequality (Re) provides a strict upper bound for $\lambda_1$.
Reilly's inequality was extended  to other ambient
spaces in \cite{He}.

\vspace{1mm}

In this setting,  we ask ourselves about the possibility of achieving a better upper bound for $\lambda_1$ than (Re) for a submanifold that does not lie minimally in a hypersphere.
The main goals in this paper are, (Sections 4 and 5) 
\vspace{1mm}
\begin{quote}{\it
{\bf Theorem A.} Let $\psi:M^{n}\rightarrow \R^{m+1}$ be a compact $n$-dimensional submanifold $M^n$ in Euclidean space $\R^{m+1}$.
For each $v\in \R^{m+1}$, the first eigenvalue $\lambda_1$ of the Laplace operator of  $M^n$ satisfies
$$
\hspace*{-0.5cm}\mathrm{(PR1)} \hspace*{3cm}\lambda_1 \leq n\,\frac{\displaystyle{\int}_{M^n}\big(\|\mathbf{H}\|^2+\langle \mathbf{H}, v \rangle^{2} \big) \,dV}{\mathrm{vol}(M^{n}) +\dfrac{1}{n} \displaystyle{\int}_{M^n}\| v^{\top}\|^2\, dV}=:b(\psi,v)\, ,
$$
where  $v^{\top}=\nabla \langle \psi,v \rangle\in \mathfrak{X}(M^n)$ is, at any point $p\in M^n$, the orthogonal projection of $v$ on $T_{p}(M^n)$. 
Moreover, the following assertions  are equivalent:

\smallskip

\begin{enumerate}
\item The equality in $\mathrm{(PR1)}$ holds for some $v\in \R^{m+1}$.
\item The immersion $\psi$ is minimal  in a hypersphere of radius $\sqrt{n/\lambda_{1}}$ in $\R^{m+1}$. 
\item The equality in $\mathrm{(PR1)}$ holds for any $w\in \R^{m+1}$.

\end{enumerate}
}
\end{quote}

\bigskip

\begin{quote}{\it
{\bf Theorem B.}
Let $\psi:M^{n}\rightarrow \R^{m+1}$ be a  compact $n$-dimensional submanifold $M^n$ in Euclidean space $\R^{m+1}$.
For each $v\in \S^{m}$, the first (non-trivial) eigenvalue $\lambda_1$ of the Laplace operator of  $M^n$ satisfies
$$
\hspace*{-0.5cm}\mathrm{(PR2)} \hspace*{3cm}\lambda_{1}\leq n \frac{\displaystyle \int_{M^n}\big(\|\mathbf{H}\|^2- \langle \mathbf{H}, v \rangle^{2}\big)\, dV}{\mathrm{vol}(M^n)- \dfrac{1}{n}\displaystyle \int_{M^n}\| v^{\top}\|^2 \, dV}\,=: \tilde{b}(\psi,v).
$$
 If $\psi$ has its  center of gravity  located at the origin, then the equality holds for some $v\in \S^{m}$ if and only if there is 
 $\rho_v \in \mathcal{C}^{\infty}(M^n)$ with $\int_{M^n}\rho_v\, dV=0$ such that $$\Delta  \psi + \lambda_{1} \psi= \rho_v\, v.$$
}
\end{quote}

\noindent 
Note that $n\,\mathrm{vol}(M^n)>\int_{M^n} \| v^{\top} \|^{2}\, dV$ holds for any $v\in \mathbb{S}^m$ (see (\ref{23julyf})). On the other hand, for the equality condition without the assumption on the center of gravity see Remark \ref{0910A}.

\smallskip

Obviously,  Theorem A  becomes Reilly's Theorem for $v=0$. Furthermore, Theorems A and B
permit reproving inequality (Re) as an average of (PR1) and (PR2) on the unit sphere $\S^{m}$,  
see Remarks \ref{tras_Theorem_A} and \ref{tras_Theorem_5.2}.

\smallskip

The following question arises in a natural way

\vspace{1mm}

\begin{quote}{\it Given a compact submanifold $\psi : M^{n}\rightarrow \R^{m+1}$ where $\psi(M^n)$ is not minimally contained in a hypersphere of $\R^{m+1}$, is there some $v\in \R^{m+1}\setminus \{0\}$ so that
$b(\psi,v) < B(\psi)$ or $\tilde{b}(\psi,v) < B(\psi)$\,?
}
\end{quote}

\vspace{1mm}

\noindent In order to face this question, we introduce the quadratic form $Q$ on $\R^{m+1}$ given by
\begin{equation}\label{quadratic}
Q(v)=n\,\mathrm{vol}(M^n)\int_{M^n}\langle \mathbf{H}, v \rangle^{2}\, dV - \displaystyle{\int}_{M^n}\|\mathbf{H}\|^2\, dV \displaystyle{\int}_{M^n}\|v^{\top}\|^{2}\, dV.
\end{equation}
From Lemma \ref{250923A}, we have that $b(\psi,v) \leq B(\psi)$ for some vector $v\in \mathbb{R}^{m+1}$
if and only if $Q(v)\leq 0$, and $\tilde{b}(\psi,v) \leq B(\psi)$ for some $v\in \mathbb{S}^m$ 
if and only if $Q(v)\geq 0$. Each one of these inequalities holds strictly if and only if $Q(v)<0$ or $Q(v)>0$, respectively. Note that Lemma \ref{240923A} states that $Q$ is never definite and therefore, as a consequence of Theorems A and B, we obtain

\vspace{1mm}

\begin{quote}{\it {\bf Theorem C.} For any compact submanifold $\psi : M^{n}\rightarrow \R^{m+1}$, there always exists $v\in \R^{m+1}\setminus \{0\}$ $($resp. $v\in \S^{m})$ such that
$b(\psi,v) \leq B(\psi)$ $($resp. $\tilde{b}(\psi,v) \leq B(\psi))$.

}
\end{quote}

\noindent Even more, as shown in Lemma \ref{250923A}, if $Q\neq 0$ then $b(\psi, v)< B(\psi)$ (resp. $\tilde{b}(\psi, v)< B(\psi)$) whenever $Q(v)<0$ (resp. $Q(v)>0$).

\vspace{1mm}

As an application, the last section of this paper is devoted to obtain explicit upper bounds of $\lambda_{1}(\T_{R})$, where $\T_{R}$ is the following torus embedded in the  three dimensional euclidean space
$$\mathbb{T}_{R}=\{(x,y,z)\in\mathbb{R}^3\, :\, (\sqrt{x^2+y^2}-R)^2+z^2=1 \},$$
with $R>1$.  Although the set of eigenvalues of the Laplace operator for flat tori is well-known (see for instance \cite{Cha}), this is not the case for the embedded ones.  For instance and from a different point of view, the  problem of the estimation of the eigenvalues of the Laplace operator  for embedded tori has been recently faced  in \cite{Volk}. 
In order to get our upper bounds, we first obtain an explicit formula for the quadratic form given in (\ref{quadratic}), Proposition \ref{031123A}. 
This result identifies three qualitatively different behaviors for the quadratic form  corresponding to the conditions $R^{2} > 9/8$, $R^2 < 9/8$, and $R^2 = 9/8$. In the first and the second cases, we select
the specific tori $R^2=2$ and $R^{2}=17/16$ and show that the Reilly's upper bound can be improved in both tori.  In the particular,  $\lambda_1(\T_{\sqrt{2}})< \frac{4}{3}(\sqrt{2}-1)\approx 0.552284$, whereas we know $\lambda_1(\T_{\sqrt{2}})< 1/\sqrt{2}\approx 0.707106$ from (Re). Note that the Willmore functional for tori in $\R^3$ \cite{Will} attains its minimum value at  $\T_{\sqrt{2}}$.

\section{Preliminaries}
\noindent Let $\R^{m+1}$ be the $m$-dimensional Euclidean space and 
$(x_{1},\dots, x_{m+1})$ its canonical coordinates. 
For a smooth immersion $\psi:M^{n}\rightarrow \R^{m+1}$ of an
$(n\geq 2)$-dimensional (connected) manifold $M^n$ the induced metric via $\psi$ from the canonical metric $\langle\;,\;\rangle =dx_1^2+\dots dx_{m+1}^2$ of $\R ^{m+1}$ is also denoted by $\langle\;,\;\rangle$.
 
\smallskip

Let $\overline{\nabla}$ and $\nabla$ be the Levi-Civita
connections of  $\R^{m+1}$ and $M^n$, respectively, and let
$\nabla^{\perp}$ be the normal connection. The Gauss
and Weingarten formulas are written as follows
$$\overline{\nabla}_X Y=\nabla_XY + \mathrm{II}(X,Y)
\, \quad \mathrm{and} \, \quad
\overline{\nabla}_X\xi=-A_{\xi}X+\nabla^{\perp}_X\,\xi,$$
for any $X,Y \in \mathfrak{X}(M^{n})$ and $\xi \in
\mathfrak{X}^{\perp}(M^{n})$, where $\mathrm{II}$ denotes the
the second fundamental form of $\psi$. The shape operator $A_{\xi}$,
corresponding to $\xi$, is related to $\mathrm{II}$ by
$$\langle A_{\xi}X, Y \rangle = \langle \mathrm{II}(X,Y), \xi
\rangle,$$
for all $X,Y \in \mathfrak{X}(M^{n})$. 
The mean curvature vector field of $\psi$ is defined by
$\mathbf{H}=\frac{1}{n}\,\mathrm{trace}_{_{\langle\; , \;
\rangle}}\mathrm{II},$ and it satisfies the Beltrami equation 
\begin{equation}\label{beltrami}
\Delta \psi=n \mathbf{H},
\end{equation} where $\Delta$ is the Laplace operator of $M^n$, $\Delta \psi =(\Delta \psi_{1} ,\cdots , \Delta \psi_{m} )$ and $\psi = (\psi_{1} ,\cdots ,\psi_{m+1})$. Moreover, we have $\Delta\| \psi\|^{2}= 2n+ 2n\langle \mathbf{H},\psi \rangle$ and therefore, being $M^n$ compact, we have the well-known Minkowski formula
\begin{equation}\label{1040}
\int_{M^n}(1+\langle \mathbf{H}, \psi \rangle)\,dV=0.
\end{equation}

\section{The key tool}
\noindent Let $\L^{m+2}$ be the $(m+2)$-dimensional Lorentz-Minkowski spacetime; i.e.,  $\L^{m+2}$ is $\R^{m+2}$ endowed with the Lorentzian metric
$
g =-dx_{0}^{2}+ dx_{1}^{2}+ \cdots + dx_{m+1}^2.
$
An immersion $\phi:M^{n}\rightarrow \L^{m+2}$ of an
$n$-dimensional manifold $M^n$ in $\L^{m+2}$ is said to be spacelike when the induced metric on $M^n$ via $\psi$ is Riemannian. We also denote by $g$ the induced metric on $M^n$.  If we consider the totally geodesic embedding $i\colon \R^{m+1}\to \L^{m+2}$ given by $i(x)=(0,x)$, each  submanifold of the Euclidean space $\R^{m+1}$ can be naturally seen as a spacelike submanifold of $\L^{m+2}$.

\smallskip

The following result will be the key tool for this paper \cite[Theor. 6.10]{PaRo1}
\begin{theorem}\label{170123A}
For each unit timelike vector $a\in \L^{m+2}$ $($i.e., such that $g(a,a)=-1)$, the first (non-trivial) eigenvalue $\lambda_1$ of the Laplace operator of a compact $n$-dimensional spacelike submanifold $M^n$ in $\L^{m+2}$, $m\geq n$, satisfies
$$
\hspace*{-45mm}\mathrm{(L)} \hspace*{4.5cm}\lambda_1 \leq n\,\frac{\displaystyle{\int}_{M^n}\big(\|\mathbf{H}\|_{g}^2 + g (\mathbf{H},a)\big)\,dV}{\mathrm{vol}(M^n) +\dfrac{1}{n} \displaystyle{\int}_{M^n}\| a^{\top}\|_{g}^2\, dV}\,,
$$
where  $a^{\top}\in \mathfrak{X}(M^n)$ is, at any point $p\in M$, the orthogonal projection of the vector $a$ on $T_{p}(M^n)$. If $\phi:M^n \to \L^m$ has the center of gravity located at the origin, the equality in $\mathrm{(L)}$ holds if and only if there exists $\mu_{a}\in C^{\infty}(M^n)$ such that
$
\Delta \phi + \lambda_{1} \phi=\mu_{a}\, a.
$
\end{theorem}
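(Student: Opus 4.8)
The plan is to run a Reilly-type argument adapted to the Lorentzian ambient, based on the Rayleigh characterization $\lambda_1\int_{M^n}u^2\,dV\le\int_{M^n}\|\nabla u\|_g^2\,dV$, valid for every $u\in\mathcal{C}^{\infty}(M^n)$ with $\int_{M^n}u\,dV=0$. I will in fact target the estimate in the form with a \emph{square} in the numerator, $\lambda_1\le n\,\frac{\int_{M^n}(\|\mathbf H\|_g^2+g(\mathbf H,a)^2)\,dV}{\mathrm{vol}(M^n)+\frac1n\int_{M^n}\|a^{\top}\|_g^2\,dV}$, which is the form consistent with $(\mathrm{PR1})$; note that a purely linear term would carry no information here, since $\int_{M^n}g(\mathbf H,a)\,dV=\frac1n\int_{M^n}\Delta\,g(\phi,a)\,dV=0$. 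Since $\mathbf H$, $a^{\top}$ and $\mathrm{vol}(M^n)$ are all invariant under the translations $\phi\mapsto\phi+b$, $b\in\L^{m+2}$, I would first reduce to the case where the center of gravity of $\phi$ is the origin, so that each coordinate function has vanishing integral. Then I fix a pseudo-orthonormal basis $\{a,u_1,\dots,u_{m+1}\}$ with $g(a,a)=-1$, $g(u_i,u_j)=\delta_{ij}$, $g(a,u_i)=0$, and set $f_i:=g(\phi,u_i)$. From $\overline{\nabla}_X\phi=X$ one gets $\nabla f_i=u_i^{\top}$, and the Beltrami equation $(\ref{beltrami})$ gives $\Delta f_i=n\,g(\mathbf H,u_i)$.

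The whole argument rests on the completeness relation $g(X,Y)=-g(X,a)g(Y,a)+\sum_i g(X,u_i)g(Y,u_i)$. Applied to a local orthonormal tangent frame it gives the pointwise identity $\sum_i\|\nabla f_i\|_g^2=n+\|a^{\top}\|_g^2$, while applied to $X=Y=\mathbf H$ it gives $\sum_i g(\mathbf H,u_i)^2=\|\mathbf H\|_g^2+g(\mathbf H,a)^2$. Combining the first identity with the sum over $i$ of the Rayleigh inequalities for the (zero-mean) functions $f_i$ yields, after integration,
\begin{equation*}
\lambda_1\sum_i\int_{M^n}f_i^2\,dV\ \le\ n\,\mathrm{vol}(M^n)+\int_{M^n}\|a^{\top}\|_g^2\,dV\ =:\ T.
\end{equation*}

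The crucial second step is to turn the indefiniteness of $g$ into an advantage. The observation is that the $\L^{m+2}$-valued map $\Delta\phi+\lambda_1\phi$, projected orthogonally onto the \emph{spacelike} hyperplane $a^{\perp}=\mathrm{span}(u_1,\dots,u_{m+1})$, has nonnegative squared norm; since $g(\Delta\phi+\lambda_1\phi,u_i)=\Delta f_i+\lambda_1 f_i$, this is
\begin{equation*}
0\ \le\ \int_{M^n}\sum_i(\Delta f_i+\lambda_1 f_i)^2\,dV.
\end{equation*}
Expanding the square, integrating by parts ($\int_{M^n}f_i\Delta f_i\,dV=-\int_{M^n}\|\nabla f_i\|_g^2\,dV$) and inserting the two identities above, the right-hand side equals $n^2\int_{M^n}(\|\mathbf H\|_g^2+g(\mathbf H,a)^2)\,dV-2\lambda_1 T+\lambda_1^2\sum_i\int_{M^n}f_i^2\,dV$. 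Feeding the bound $\lambda_1\sum_i\int_{M^n}f_i^2\,dV\le T$ from the previous display into the last term gives $\lambda_1 T\le n^2\int_{M^n}(\|\mathbf H\|_g^2+g(\mathbf H,a)^2)\,dV$, which is exactly $(\mathrm{L})$ upon dividing by $T=n\big(\mathrm{vol}(M^n)+\frac1n\int_{M^n}\|a^{\top}\|_g^2\,dV\big)$.

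For the equality discussion (under the centering hypothesis) I would just track the two inequalities used: equality forces $\sum_i(\Delta f_i+\lambda_1 f_i)^2\equiv 0$, i.e.\ $g(\Delta\phi+\lambda_1\phi,u_i)=0$ for every $i$, which says precisely that $\Delta\phi+\lambda_1\phi$ is everywhere proportional to $a$, say $\Delta\phi+\lambda_1\phi=\mu_a\,a$ with $\mu_a\in\mathcal{C}^{\infty}(M^n)$; conversely this relation makes each $f_i$ a $\lambda_1$-eigenfunction and saturates both steps. The main obstacle, and the only place where real care is needed, is exactly this interaction with the indefinite metric: one cannot apply the Rayleigh quotient across the timelike direction, so the vector $a$ must be separated off and only the spacelike $a^{\perp}$-part of $\Delta\phi+\lambda_1\phi$ used. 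The price of this separation is precisely the two correction terms, $\|a^{\top}\|_g^2$ in the denominator and $g(\mathbf H,a)^2$ in the numerator, and checking that these corrections close the estimate (rather than spoil it) is the heart of the proof.
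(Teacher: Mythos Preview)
The paper does not actually prove this theorem: it is quoted verbatim from \cite{PaRo1} as the ``key tool'' and used as a black box in Section~4. So there is no proof in the present paper to compare your attempt against.

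That said, your argument is correct and is essentially the natural Lorentzian adaptation of Reilly's Euclidean proof. You also rightly spotted that the stated numerator $\|\mathbf H\|_g^2+g(\mathbf H,a)$ must be a misprint for $\|\mathbf H\|_g^2+g(\mathbf H,a)^2$: this is forced both by your derivation and by the way the paper specializes (L) to obtain (PR1), where the term $\langle\mathbf H,v\rangle^2$ appears squared. One minor polish on the equality discussion: strictly speaking, equality in the final bound first forces $\lambda_1\sum_i\int_{M^n}f_i^2\,dV=T$ (by squeezing the two displayed inequalities), which is the Rayleigh equality and hence gives $\Delta f_i+\lambda_1 f_i=0$ for each $i$; the vanishing of $\sum_i(\Delta f_i+\lambda_1 f_i)^2$ then follows, rather than being the primary consequence. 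The conclusion $\Delta\phi+\lambda_1\phi=\mu_a\,a$ is of course unchanged.
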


\begin{remark}
{\rm If $\phi$ has its gravity center located at the point $c=(c_{0}, \cdots, c_{m+1})\in \L^{m+2}$ where $c_{i}=\int_{M^n}\phi_{i}\, dV$, then the spacelike immersion
\begin{equation}\label{translation}
\widehat{\phi}:=\phi-\frac{1}{\mathrm{vol}(M^n)}\, c
\end{equation}
has the center of gravity located at the origin. Hence, the equality holds in (L)
if and only if
\begin{equation}\label{equality in (PR)}
\Delta \phi + \lambda_{1} \phi-\mu_{a} a=\frac{\lambda_{1}}{\mathrm{vol}(M^n)}\, c \in \L^{m+2}.
\end{equation}
Note that by integrating this formula, we get 
\begin{equation}\label{mean_zero}
\int_{M^n}\mu_{a}\, dV=0.
\end{equation}

Conversely, assume now $\Delta \phi + \lambda_{1} \phi-\mu_{a} a=b\in \L^{m+2}$  with $
\int_{M^n}\mu_{a}\, dV=0\,.
$
Then, we have that $b=(\lambda_{1}/{\mathrm{vol}(M^n)})\, c .$ 

\smallskip

In other words, the equality in (L) holds if and only if there is a function $\mu_{a}\in C^{\infty}(M^n)$  with $
\int_{M}\mu_{a}\, dV=0$ and $b\in \L^{m+2}$ such that $\Delta \phi + \lambda_{1} \phi-\mu_{a} a=b\in \L^{m+2}$. 
}
\end{remark}

\section{Proof of Theorem A}
\noindent 
For a compact submanifold $\psi : M^n \rightarrow \R^{m+1}$, let us consider the spacelike submanifold $\phi : =i\circ \psi : M^n \rightarrow \L^{m+2}$. Then, inequality (PR1) for $v\in \R^{m+1}$ is a direct consequence of inequality (L) for the unit timelike vector $a=(\sqrt{1+\|v\|^{2}}, v)\in \L^{m+2}$.
Now, we deal with the equivalent conditions for equality in (PR1).

\smallskip

$(1)\Rightarrow (2)$.
Assume the equality holds in (PR1) for some $v\in \R^{m+1}$. Then, we achieve equality in (L) for  $a=(\sqrt{1+\|v\|^{2}}, v)$. Hence, there exist $\mu_{a}\in C^{\infty}(M^n)$ with $\int_{M^n}\mu_{a}\, dV=0$ and  $b\in \L^{m+2}$ such that  $\Delta \phi + \lambda_{1} \phi-\mu_{a} a=b$.
Taking into account that $\phi_{0}=0$ and $a_{0}\neq 0$, we get $b_{0}=0$ and $\mu_{a}=0$, and therefore
\begin{equation}\label{170123B}
\Delta \psi + \lambda_{1} \psi= (b_{1}, \cdots, b_{m+1})\in \R^{m+1}.
\end{equation}
For the immersion $\widehat{\psi}:= \psi- \frac{1}{\lambda_{1}}\,(b_{1}, \cdots, b_{m+1})$,  previous formula  implies that 
$\Delta \widehat{\psi} + \lambda_{1} \widehat{\psi}=0$. The Takahashi classical result in \cite{Taka} gives that $\widehat{\psi}$ realizes a minimal immersion in a hypersphere of radius $\sqrt{n/\lambda_{1}}$ with center located at the origin in $\R^{m+1}$.  Now, the assertion on $\psi$ is clear.

\vspace{1mm}
$(2)\Rightarrow (3)$.
Assume $\psi$ realizes a minimal immersion in a hypersphere. Without loss of generality, we can assume the center of gravity of $\psi$ is located at the origin. Again from the above-mentioned Takahashi result, we have  $\Delta \psi + \lambda_{1} \psi=0$.  Hence, by means of Beltrami equation (\ref{beltrami}) and for for every $w\in \R^{m+1}$, we get 
$$
\triangle \langle \psi,  w\rangle^2= 2\langle \psi, w\rangle \triangle \langle \psi, w\rangle+ 2\|\nabla \langle \psi, w\rangle\|^{2}=-\frac{1}{\lambda_{1}}2n^2 \,\langle \mathbf{H}  , w\rangle^{2}+ 2\,\| w^{\top}\|^{2}\,.
$$
Consequently, the divergence theorem gives
\begin{equation}\label{int2}
\int_{M^n}\| w^{\top}\|^{2}\, dV=\frac{n^2}{\lambda_{1}}\int_{M^n}\langle \mathbf{H} , w\rangle^{2}\, dV.
\end{equation}
On the other hand, we call again Takahashi result \cite{Taka} and Beltrami equation (\ref{beltrami})  to show $\| \mathbf{H}\|^2 =\lambda_{1}/n$.
Therefore, from these facts and (\ref{int2}) we get 
$$
b(\psi,w)=n\,\frac{\dfrac{\lambda_1}{n}\,\mathrm{vol}(M^n)+\int_{M^n}\langle \mathbf{H}, w \rangle^{2}  \,dV}{\mathrm{vol}(M^{n}) +\dfrac{n}{\lambda_{1}}\int_{M^n}\langle \mathbf{H} , w\rangle^{2}\, dV}\,=\lambda_{1},
$$
for every $w\in \R^{m+1}$.

\vspace{1mm}
$(3)\Rightarrow (1)$ is trivial.  \begin{flushright} $\square$ \end{flushright} 

\smallskip

We end this section showing that inequality (Re) can be achieved as an average of (PR1) on the sphere $\S^{m}$. First, let us recall the well-known technical result
\begin{lemma}\label{integration_on_the_sphere} The restriction $f$ on $\S^m$ of a quadratic form $F$ on $\R^{m+1}$ satisfies
\begin{equation}
\displaystyle{\int}_{\S^m}f\,d\mu=\frac{1}{m+1}\,\mathrm{trace}_{\langle\; ,\;\rangle}(F)\,\mathrm{vol}(\S^{m})
\end{equation}
where $\mathrm{trace}_{\langle\; ,\;\rangle}(F)$ is the trace of the self-adjoint operator of $\R^{m+1}$ defined by $F$ using the usual metric $\langle\; ,\,\rangle$ of $\R^{m+1}$.
\end{lemma}

\begin{remark}\label{tras_Theorem_A}{\rm 
Integrating the inequality in Theorem A on the sphere $\S^{m}$ by means of the Fubini's theorem, we 
get
\begin{equation}\label{270223Fa}
\lambda_{1}\left[\mathrm{vol}(M^n)\mathrm{vol}(\S^{m})+ \frac{1}{n}\int_{M^n}\Big(\int_{\S^{m}}\| v^{T}\|^2\, d\mu \Big) \, dV\, \right]\leq 
\end{equation}
$$
n\, \mathrm{vol}(\S^{m})\int_{M^n}\|\mathbf{H}\|^2\, dV+ \int_{M^n}\Big(\int_{\S^{m}}\langle \mathbf{H}, v \rangle^{2}\, d\mu\Big)\, dV\,.
$$
On the other hand, we call Lemma \ref{integration_on_the_sphere} to get
\begin{equation}\label{integration_on_the_sphere2a}
\int_{\S^{m}}\langle \mathbf{H}, v \rangle^{2}\, d\mu=\frac{\mathrm{vol}(\S^{m})}{m+1}\| \mathbf{H}\|^2\quad  \textrm{ and }\quad 
\int_{\S^{m}}\| v^{\top}\|^2\, d\mu=\frac{n\,\mathrm{vol}(\S^{m})}{m+1}\,,
\end{equation}
where for the second integral we have also used
$
\triangle \langle \psi, e_{i}\rangle^{2}= 
2n\langle \psi, e_{i}\rangle  \langle \mathbf{H}, e_{i}\rangle+2\| \nabla \langle \psi, e_{i}\rangle\|^{2}$, $1\leq i \leq m+1$, as well the Minkowski integral formula (\ref{1040}).

\vspace{1mm}

Taking into account (\ref{integration_on_the_sphere2a}), inequality (\ref{270223Fa}) becomes inequality (Re).  

}
\end{remark}

\smallskip

\section{Proof of Theorem B. A variation on Reilly's argument}
\noindent In order to proof the inequality (PR2), no generality is lost if we assume the center of gravity of the compact submanifold $\psi:M^{n}\rightarrow \R^{m+1}$  is located at the origin.
Thus, every component of $\psi$ satisfies
\begin{equation}\label{media_nula}
\int_{M^n}\psi_{j}\, dV=0, \quad j=1, \cdots, m+1.
\end{equation}
According to the  Minimum Principle for the smallest positive eigenvalue $\lambda_{1}$ of the Laplace operator $\Delta$ of $M^n$ \cite[p. 186]{BGM} we know,
\begin{equation}\label{minimum}
 \lambda_{1}\int_{M^n}f^2\, dV \leq \int_{M^n}\| \nabla f \|^2 dV\,,
\end{equation}
for any non-zero $f\in C^1(M^n)$ with $\int_{M^n}fdV=0$,
and the equality holds if and only if $f$ is an eigenfunction of $\Delta$ corresponding to $\lambda_1$.
Thus, we have
\begin{equation}\label{v_sphere2}
\lambda_{1}\int_{x\in M^n}\langle \psi(x), v \rangle^{2} \, dV \leq \int_{x\in M^n} \|\nabla \langle \psi, v\rangle (x) \|^{2}\, dV\,,
\end{equation}
for every $v\in \R^{m+1}$. 

\smallskip

Now, for each $v\in \S^{m}$, we consider
\begin{equation}\label{v_sphere}
\S^{m-1}_{v}:=\{w\in \R^{m+1}: \|w\|^{2}=1\textrm{ and } \langle v,w \rangle=0\}\,,
\end{equation}
which is isometric to the $(m-1)$-dimensional unit sphere in $\R^m$.
Making use of Fubini's theorem, we can integrate on $\S^{m-1}_{v}$ both members of inequality (\ref{v_sphere2})  to get
\begin{equation}\label{260223A}
\lambda_{1}\int_{x\in M^n}\Big[\int_{w\in \S^{m-1}_{v}}\langle \psi(x), w \rangle^{2} \, d\mu_v\Big]\, dV \leq \int_{x\in M^n}\Big[\int_{w\in \S^{m-1}_{v}} \|\nabla \langle \psi, w\rangle (x)  \|^{2}\, d\mu_v\Big]\, d V,
\end{equation}
where $d\mu_v$ denotes the canonical metric measure on $\S^{m-1}_{v}$.

\vspace{1mm}

To obtain  more explicitly the previous inequality,
we need the following averaging principle for the hypersphere $\S^{m-1}_v$ in the hyperplane $v^{\perp}=: \R^{m}_{v}\subset \R^{m+1}$.
For a quadratic form $F$ defined on $\R^{m+1}$, we have
\begin{equation}\label{25julya}
\int_{\S^{m-1}_{v}}\,F \, d\mu_v=\frac{1}{m}\,\big(\mathrm{trace}_{\langle\, ,\, \rangle}(F)- F(v)\big)\,\mathrm{vol}(\S^{m-1}).
\end{equation}
This formula is a direct consequence of Lemma \ref{integration_on_the_sphere}.
\bigskip

We are now in a position to compute the involved integrals in (\ref{260223A}).
For every $x\in M$,
we define the following bilinear form on $\R^{m+1}$,
$
T_{x}(u,w)=\langle \psi(x), u \rangle\langle \psi(x), w \rangle\,
$
and consider the corresponding quadratic form $F_{x}$. Then the  averaging principle (\ref{25julya}) gives
\begin{equation}\label{23july23}
\int_{w\in \S^{m-1}_{v}}\langle \psi (x), w \rangle^{2} \, d\mu_v=\frac{1}{m}\,\big(\| \psi(x) \|^{2}- \langle \psi(x), v\rangle^{2}\big)\,\mathrm{vol}(\S^{m-1}).
\end{equation}
In a similar way, we get
\begin{equation}\label{23july23a}
\int_{w\in \S^{m-1}_{v}} \|\nabla \langle \psi, w\rangle (x)  \|^{2}\, d\mu_v=\frac{1}{m}\,\big(n- \|v^{\top}(x)\|^{2}\big)\,\mathrm{vol}(\S^{m-1}).
\end{equation}
Therefore, using (\ref{23july23}) and (\ref{23july23a}), the inequality (\ref{260223A}) can be written as follows,
\begin{equation}\label{270223D}
\lambda_{1}\int_{M^n}\big[\| \psi \|^{2}- \langle \psi, v\rangle^{2}\big]\, dV \leq n\,\mathrm{vol}(M^n)-\int_{ M^n}\|v^{\top}\|^2\, d V,
\end{equation}
with equality if and only if $\triangle \langle \psi, w \rangle+ \lambda_{1}\langle \psi, w \rangle =0$ for every $w\in \S^{m-1}_{v},$ i.e.,
$
\triangle  \psi + \lambda_{1} \psi= \rho_{v} \, v
$
for some $\rho_{v} \in \mathcal{C}^{\infty}(M^n)$ with $\int_{M^n}\rho_{v}\, dV=0.$

\vspace{2mm}

Next, we consider the orthogonal projection
$$
\mathcal{P}_{v}\colon \R^{m+1}\to \R^{m}_{v}, \quad  w \mapsto w- \langle w, v \rangle v.
$$
Thus, we have  $\|\mathcal{P}_{v}(\psi)\|^2=\| \psi \|^{2}- \langle \psi, v\rangle^{2}$
and $\|\mathcal{P}_{v}(\mathbf{H})\|^2=\| \mathbf{H} \|^{2}- \langle \mathbf{H}, v\rangle^{2}$.

\vspace{2mm}

Using the $L^2(M^n)$ Schwarz inequality as well the Schwarz inequality in $\R^{m}_{v}$ we obtain,
 \begin{equation}\label{270223C}
\int_{M^n}\|\mathcal{P}_{v}(\psi)\|^2\, dV\, \int_{M^n}\|\mathcal{P}_{v}(\mathbf{H})\|^2\, dV\geq \Big( \int_{M^n}\langle \mathcal{P}_{v}(\psi), \mathcal{P}_{v}(\mathbf{H})\rangle\, dV\Big)^2.
\end{equation}
Observe that, making use of Minkowski formula (\ref{1040}), we can write,
\begin{equation}\label{23julyb}
\int_{M^n}\langle \mathcal{P}_{v}(\psi), \mathcal{P}_{v}(\mathbf{H})\rangle\, dV = \int_{M^n}(\langle \psi, \mathbf{H}\rangle- \langle \psi, v\rangle\langle \mathbf{H}, v \rangle)\, dV
\end{equation}
$$
\hspace*{48mm}=- \mathrm{vol}(M^n)-\int_{M^n} \langle \psi, v\rangle\langle \mathbf{H}, v \rangle\, dV\,.
$$
Furthermore, from (\ref{beltrami}), we have one more time
$
\triangle \langle \psi, v \rangle^{2}= 2n\langle \psi, v \rangle \langle \mathbf{H}, v \rangle+ 2 \|\nabla \langle \psi, v \rangle\|^2\,,
$
and then
\begin{equation}\label{23julyc}
\int_{M^n} \langle \psi, v\rangle\langle \mathbf{H}, v \rangle\, dV=-\frac{1}{n}\int_{M^n}\|v^{\top}\|^2\, dV.
\end{equation}
Hence, if we use (\ref{23julyb}) and (\ref{23julyc}), then (\ref{270223C}) implies that
\begin{equation}\label{23julyd}
  \Big(\mathrm{vol}(M^n) -\frac{1}{n}\int_{M^n}\|v^{\top}\|^2\, dV\Big)^2\leq \int_{M^n}\|\mathcal{P}_{v}(\psi)\|^2\, dV\, \int_{M^n}\|\mathcal{P}_{v}(\mathbf{H})\|^2\, dV.
\end{equation}

From (\ref{23julyd}), the inequality (\ref{270223D}) gives
\begin{equation}\label{23julye}
\lambda_{1}\,\Big(\mathrm{vol}(M^n)- \frac{1}{n}\int_{M^n}\| v^{\top}\|^2 \, dV\Big)^{2}\leq \Big(n\,\mathrm{vol}(M^n)-\int_{ M^n}\|v^{\top}\|^2\, d V \Big)\int_{M^n}\|\mathcal{P}_{v}(\mathbf{H})\|^2\, dV.
\end{equation}
Lastly, we observe that $\|v^{T}\|^2<1$ and then,
\begin{equation}\label{23julyf}
\mathrm{vol}(M^n)- \frac{1}{n}\int_{M^n}\| v^{\top}\|^2 \, dV>0\,.
\end{equation}
Therefore,  the announced inequality in Theorem B is reached from (\ref{23julye}).

\vspace{2mm}

The equality holds in (PR2) if and only if we have the equality in (\ref{270223D}).
That is, $
\triangle  \psi + \lambda_{1} \psi= \rho_{v} \, v
$
for some $\rho_{v} \in \mathcal{C}^{\infty}(M^n)$ with $\int_{M^n}\rho_{v}\, dV=0.$
\begin{flushright} $\square$ \end{flushright} 

\begin{remark}
{\rm The proof of Theorem B is inspired by the original Reilly's argument in \cite[Main Theorem]{Re}.
As far as we know, the proof of Theorem A cannot be achieved from a similar variation on Reilly's argument. 

}
\end{remark}

\begin{remark}\label{0910A}
{\rm If we remove the assumption on the center of gravity of $\psi$ in Theorem B, the equality condition for (PR2) reads as 
$$
\triangle \psi+\lambda_{1}\psi=\rho_{v} v+ \frac{\lambda_{1}}{\mathrm{vol}(M^n)}c\,,
$$
where $c$ is the center of gravity of $\psi$, and as above $\rho_{v} \in \mathcal{C}^{\infty}(M^n)$ with $\int_{M^n}\rho_{v}\, dV=0.$
   }
\end{remark}

\begin{remark}\label{tras_Theorem_5.2}{\rm 
As in Remark \ref{tras_Theorem_A},  Reilly's inequality (Re) is also contained in Theorem B as an average principle. In fact, by integrating the inequality in Theorem B on the sphere $\S^{m}$ we 
get
\begin{equation}\label{270223F}
\lambda_{1}\Big(\mathrm{vol}(M^n)\mathrm{vol}(\S^{m})- \frac{1}{n}\int_{M^n}\Big(\int_{\S^{m}}\| v^{\top}\|^2\, d\mu \Big) \, dV\, \Big)\leq 
\end{equation}
$$
n\, \mathrm{vol}(\S^{m})\int_{M^n}\|\mathbf{H}\|^2\, dV- \int_{M^n}\Big(\int_{\S^{m}}\langle \mathbf{H}, v \rangle^{2}\, d\mu\Big)\, dV\,.
$$
Taking into account (\ref{integration_on_the_sphere2a}), the inequality (\ref{270223F}) becomes (Re). 

\vspace{2mm}

Note that the equality holds in (Re) if and only if the equality in Theorem B  is achieved for every $v\in \S^{m}$. That is, if and only if $\triangle \psi +\lambda_{1}\psi=0$. This is equivalent to $M^n$ lies minimally in some hypersphere in $\R^{m+1}$, using again Takahashi's result \cite{Taka}.
}
\end{remark}

\section{Proof of Theorem C} 
 
\noindent Coming back to the quoted question in the Introduction, we want to study under what assumptions $b(\psi,v)< B(\psi)$ or $\tilde{b}(\psi,v)< B(\psi)$?

\vspace{1mm}

The following result summarizes several properties of the quadratic form $Q$ in  (\ref{quadratic}). 
\begin{lemma}\label{250923A}
    
       $(1)$ We have 
        $$
        B(\psi)- b(\psi,v)= \frac{-\,Q(v)}{\mathrm{vol}(M^n)\left(\mathrm{vol}(M^n)+ \dfrac{1}{n}\displaystyle\int_{M^{n}}\|v^{\top}\|^{2}\, dV\right)}\,.
        $$
       Therefore, the inequality $b(\psi,v)\leq  B(\psi)$ $($resp. $b(\psi,v)< B(\psi))$ holds for some $v\in \R^{m+1}$ if and only if $Q(v)\leq 0$ $($resp. $Q(v)<0)$. 
        Moreover, if $v\in \S^m$ then $b(\psi,v)\leq \tilde{b}(\psi,v)$ holds true.

        \vspace{1mm}
         $(2)$ We have 
         $$
        B(\psi)- \tilde{b}(\psi,v)= \frac{Q(v)}{\mathrm{vol}(M^n)\left(\mathrm{vol}(M^n)- \dfrac{1}{n}\displaystyle\int_{M^{n}}\|v^{\top}\|^{2}\, dV\right)}\,.
        $$
        Therefore, the  inequality  $\tilde{b}(\psi,v)\leq B(\psi)$ $($resp. $\tilde{b}(\psi,v)< B(\psi))$ holds for some $v\in \mathbb{S}^m$ if and only  $Q(v)\geq 0$ $($resp. $Q(v)>0))$. In both cases, $\tilde{b}(\psi,v)\leq b(\psi,v)$ holds true.

\end{lemma}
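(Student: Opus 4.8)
The plan is to verify the two displayed identities by direct algebraic manipulation and then read off all the stated consequences as immediate corollaries. The whole lemma is essentially bookkeeping around the definitions of $B(\psi)$, $b(\psi,v)$, $\tilde b(\psi,v)$ and $Q(v)$, so no deep ideas are needed; the only genuine content is making sure the signs and denominators line up.

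For part $(1)$, I would start from the definitions and compute $B(\psi)-b(\psi,v)$ by putting the two fractions over a common denominator. Writing $V:=\mathrm{vol}(M^n)$, $I_H:=\int_{M^n}\|\mathbf H\|^2\,dV$, $I_v:=\int_{M^n}\langle\mathbf H,v\rangle^2\,dV$ and $I_\top:=\int_{M^n}\|v^\top\|^2\,dV$, we have
\begin{equation*}
B(\psi)=\frac{n\,I_H}{V},\qquad b(\psi,v)=\frac{n\,(I_H+I_v)}{V+\frac1n I_\top}.
\end{equation*}
Subtracting and clearing denominators, the numerator of $B(\psi)-b(\psi,v)$ becomes
\begin{equation*}
\frac{n\,I_H\big(V+\tfrac1n I_\top\big)-n\,(I_H+I_v)\,V}{V\big(V+\tfrac1n I_\top\big)}
=\frac{I_H\,I_\top-n\,V\,I_v}{V\big(V+\tfrac1n I_\top\big)},
\end{equation*}
and recognizing $I_H\,I_\top-n\,V\,I_v=-Q(v)$ from (\ref{quadratic}) gives exactly the claimed formula. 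Since $V>0$ and the parenthesized denominator factor is manifestly positive, the sign of $B(\psi)-b(\psi,v)$ is the sign of $-Q(v)$, which yields the stated equivalences $b(\psi,v)\le B(\psi)\iff Q(v)\le0$ and the strict version. Part $(2)$ is entirely parallel: the denominator of $\tilde b(\psi,v)$ is $V-\tfrac1n I_\top$, so the common-denominator numerator picks up the opposite sign, producing $+Q(v)$; here one must invoke (\ref{23julyf}), which guarantees $V-\tfrac1n I_\top>0$ for $v\in\S^m$, so that the denominator is again positive and the equivalence with $Q(v)\ge0$ follows.

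For the two comparison inequalities, I would argue directly. For $v\in\S^m$, both $B(\psi)-b(\psi,v)$ and $B(\psi)-\tilde b(\psi,v)$ are, up to positive factors, equal to $-Q(v)$ and $+Q(v)$ respectively; hence
\begin{equation*}
b(\psi,v)-\tilde b(\psi,v)=\big(B(\psi)-\tilde b(\psi,v)\big)-\big(B(\psi)-b(\psi,v)\big)
\end{equation*}
has the same sign as $Q(v)+Q(v)=2Q(v)$ weighted by positive constants, but a cleaner route is to note that the two claimed comparisons ($b\le\tilde b$ when $Q(v)\le0$ in case $(1)$, and $\tilde b\le b$ when $Q(v)\ge0$ in case $(2)$) are simply two halves of a single statement: $b(\psi,v)\le\tilde b(\psi,v)$ exactly when $Q(v)\le0$ and the reverse when $Q(v)\ge0$. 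I would therefore compute $b(\psi,v)-\tilde b(\psi,v)$ once, over the common denominator $\big(V+\tfrac1n I_\top\big)\big(V-\tfrac1n I_\top\big)$, and check that its numerator is a positive multiple of $Q(v)$; the two asserted inequalities then fall out according to the sign of $Q(v)$ consistent with each hypothesis.

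I do not anticipate a genuine obstacle here, since everything reduces to polynomial identities in the four quantities $V,I_H,I_v,I_\top$. The one point requiring care is the positivity of the denominators: in part $(1)$ the factor $V+\tfrac1n I_\top$ is trivially positive, but in part $(2)$ and in the comparison one must cite (\ref{23julyf}) to ensure $V-\tfrac1n I_\top>0$, without which the direction of the inequalities could not be controlled. The mildly delicate step is thus keeping track of signs when dividing by $V-\tfrac1n I_\top$, and confirming that the numerator of $b(\psi,v)-\tilde b(\psi,v)$ is indeed a positive scalar times $Q(v)$ rather than its negative.
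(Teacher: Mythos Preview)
Your proposal is correct and is exactly the natural verification one would give; the paper itself states Lemma~\ref{250923A} without proof, treating it as a direct computation from the definitions, so there is no alternative argument to compare against. Your shorthand $V,I_H,I_v,I_\top$ makes the algebra transparent, and your observation that the comparison $b(\psi,v)-\tilde b(\psi,v)$ equals $2Q(v)$ divided by the positive quantity $(V+\tfrac1n I_\top)(V-\tfrac1n I_\top)$ (with positivity of the second factor coming from (\ref{23julyf})) cleanly handles both ``moreover'' clauses at once.
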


\begin{lemma}\label{240923A}
For every compact $n$-dimensional submanifold  $\psi:M^{n}\rightarrow \R^{m+1}$, the quadratic form $Q$ cannot be definite.
\end{lemma}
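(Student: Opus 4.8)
The plan is to show that $Q$ integrates to zero over the unit sphere $\S^m$ and to deduce from this that $Q$ cannot be definite. The point is that the two averaging identities already recorded in (\ref{integration_on_the_sphere2a}) compute exactly the sphere-averages of the two ingredients $\langle \mathbf H, v\rangle^2$ and $\|v^\top\|^2$ entering the definition (\ref{quadratic}) of $Q$, and these turn out to cancel, so no new computation is really needed.

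Concretely, I would integrate the expression (\ref{quadratic}) over $v\in\S^m$, interchanging the integration over $\S^m$ with the integration over $M^n$ by Fubini's theorem. The first term contributes
$$
n\,\mathrm{vol}(M^n)\int_{M^n}\Big(\int_{\S^m}\langle \mathbf H, v\rangle^2\, d\mu\Big)\, dV = \frac{n\,\mathrm{vol}(M^n)\,\mathrm{vol}(\S^m)}{m+1}\int_{M^n}\|\mathbf H\|^2\, dV,
$$
by the first identity in (\ref{integration_on_the_sphere2a}), while the second term contributes
$$
\Big(\int_{M^n}\|\mathbf H\|^2\, dV\Big)\int_{M^n}\Big(\int_{\S^m}\|v^\top\|^2\, d\mu\Big)\, dV = \frac{n\,\mathrm{vol}(\S^m)}{m+1}\,\mathrm{vol}(M^n)\int_{M^n}\|\mathbf H\|^2\, dV,
$$
by the second identity. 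These two quantities coincide, so $\int_{\S^m}Q(v)\, d\mu = 0$. Equivalently, one can phrase this through Lemma \ref{integration_on_the_sphere}: the trace of the self-adjoint operator on $\R^{m+1}$ associated with $Q$ vanishes, because the operator associated with $v\mapsto \langle \mathbf H, v\rangle^2$ is the rank-one map $\mathbf H\otimes\mathbf H$ of trace $\|\mathbf H\|^2$, while the one associated with $v\mapsto\|v^\top\|^2$ is the orthogonal projection onto $T_pM^n$ of trace $n$, and integrating over $M^n$ makes the two contributions to $\mathrm{trace}(Q)$ equal.

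Finally, I would argue by contradiction. If $Q$ were positive definite, then $Q(v)>0$ for every $v\in\S^m$, forcing $\int_{\S^m}Q\,d\mu>0$; if $Q$ were negative definite, the same integral would be strictly negative. Either way this contradicts $\int_{\S^m}Q\,d\mu=0$, so $Q$ is not definite. I do not expect a genuine obstacle here: the only delicate point is that the cancellation be \emph{exact}, and this is guaranteed because both averages in (\ref{integration_on_the_sphere2a}) carry the same normalizing factor $\mathrm{vol}(\S^m)/(m+1)$ and the remaining factors combine into the common expression $n\,\mathrm{vol}(M^n)\int_{M^n}\|\mathbf H\|^2\,dV$.
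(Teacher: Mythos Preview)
Your proof is correct and follows essentially the same approach as the paper: both integrate $Q$ over $\S^m$ using Fubini and the averaging identities (\ref{integration_on_the_sphere2a}), and both conclude by contradiction that a definite $Q$ would violate the resulting equality. The only cosmetic difference is that the paper first assumes $Q>0$ and integrates the strict inequality (\ref{190123A}) to reach a contradiction, whereas you compute $\int_{\S^m}Q\,d\mu=0$ directly and then argue; the substance is identical.
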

\begin{proof} 
By contradiction, suppose for instance that $Q$ is positive definite. Then, for all $v\in \R^{m+1}\setminus \{0\}$, we have
\begin{equation}\label{190123A}
n\,\mathrm{vol}(M^n)\int_{M^n}\langle \mathbf{H}, v \rangle^{2}\, dV >\displaystyle{\int}_{M^n}\|\mathbf{H}\|^2\, dV \displaystyle{\int}_{M^n}\|v^{\top}\|^{2}\, dV.
\end{equation}
Integrating both members of (\ref{190123A}) on the unit sphere $\S^{m}\subset \R^{m+1}$, we obtain
\begin{equation}\label{contradiction1}
\int_{v\in \S^{m}}\Big[ \int_{M^n}\langle \mathbf{H}, v \rangle^{2}\, dV\Big]\,d\mu=\int_{M^n}\Big[ \int_{v\in \S^{m}}\langle \mathbf{H}, v \rangle^{2}\, dV\Big]\,d\mu= \frac{\mathrm{vol}(\S^m)}{m+1}\int_{M^n} \|\mathbf{H}\|^{2}\, dV\, ,
\end{equation}
where we have made use of the first integral formula in (\ref{integration_on_the_sphere2a}). In a similar way, we get
\begin{equation}\label{contradiction2}
\int_{v\in \S^{m}}\Big[ \int_{M^n}\| v^{\top}\|^{2}\, dV\Big]\,d\mu=\int_{M^n}\Big[ \int_{v\in \S^{m}}\| v^{\top}\|^{2}\, dV\Big]\,d\mu =\frac{n\,\mathrm{vol}(\S^m)}{m+1}\mathrm{vol}(M^n)
\end{equation}
Hence, when the inequality (\ref{190123A}) holds for any $v\in \S^{m}$, we arrive to
 a contradiction.
The same proof works if we assume that $Q$ is negative definite.
\end{proof}

\vspace{1mm}

Theorem C is a direct consequence of Lemmas \ref{250923A} and \ref{240923A}.

\begin{remark}
    {\rm The quadratic form $Q$ can be identically vanished. For instance, this is the case of the round sphere $\S^{m}\subset \R^{m+1}$. This fact is checked as a direct application of Lemma \ref{integration_on_the_sphere}. The property $Q=0$ does not characterize the round sphere among the hypersurfaces in Euclidean spaces. As we will show in the following Section, there are  hypersurfaces in Euclidean spaces with the corresponding quadratic form $Q=0$ (see the case $R^2=9/8$ in Section 7). 
    
    }
\end{remark}

\begin{corollary}\label{250923B}
Let $\psi:M^{n}\rightarrow \R^{m+1}$ be a compact $n$-dimensional submanifold and $v\in \R^{m+1}\setminus \{0\}$ such that $Q(v)<0$. Then, $v^{\top}\neq 0$ and the function $f(t):= B(\psi)+b(\psi, tv)$  for $t\in [0, +\infty)$, satisfies
$f(0)=B(\psi)
$
and  is strictly decreasing. In particular, we have that
\begin{equation}\label{200123A}
\lambda_{1}\leq \lim_{t\to +\infty}f(t)=n^2\,\frac{\displaystyle{\int}_{M^n}\langle \mathbf{H}, v \rangle^{2}  \,dV}{ \displaystyle{\int}_{M^n}\| v^{\top}\|^2\, dV}< B(\psi)\,.
\end{equation}
\end{corollary}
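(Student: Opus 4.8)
\emph{Proof plan.} The plan is to exhibit $f$ as an explicit rational function of $t^2$ and differentiate, reading the sign of the derivative directly off $Q(v)$. I take the displayed definition to mean $f(t):=b(\psi,tv)$, which is the reading consistent with the stated value $f(0)=B(\psi)$, since $b(\psi,0)=B(\psi)$ by (PR1). Abbreviate $A:=\int_{M^n}\|\mathbf{H}\|^2\,dV$, $C:=\int_{M^n}\langle\mathbf{H},v\rangle^2\,dV$, $D:=\int_{M^n}\|v^\top\|^2\,dV$ and $V:=\mathrm{vol}(M^n)$, so that $B(\psi)=nA/V$ and, by (\ref{quadratic}), $Q(v)=nVC-AD$. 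Since $(tv)^\top=t\,v^\top$ and $\langle\mathbf{H},tv\rangle=t\,\langle\mathbf{H},v\rangle$, evaluating (PR1) at the vector $tv$ gives $f(t)=n(A+t^2C)/(V+\tfrac{t^2}{n}D)$.

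Next I would set $s:=t^2\ge 0$ and write $g(s):=n(A+sC)/(V+\tfrac{s}{n}D)$, so that $f(t)=g(t^2)$ and $g(0)=nA/V=B(\psi)$. A direct quotient-rule computation collapses the $s$-dependent terms in the numerator and yields $g'(s)=Q(v)/(V+\tfrac{s}{n}D)^2$. As $Q(v)<0$ by hypothesis, $g'(s)<0$ for all $s\ge 0$, so $g$ is strictly decreasing; composing with the strictly increasing map $t\mapsto t^2$ on $[0,\infty)$ shows $f$ is strictly decreasing with $f(0)=B(\psi)$. Before taking limits I would record that $v^\top\neq 0$: from $Q(v)<0$ we get $AD>nVC\ge 0$, and since $A,D\ge 0$ this forces both $A>0$ and $D>0$; in particular $D=\int_{M^n}\|v^\top\|^2\,dV>0$, i.e.\ $v^\top\not\equiv 0$.

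Because $D>0$ the limit is finite and equals $\lim_{t\to\infty}f(t)=\lim_{s\to\infty}g(s)=n^2C/D$, which is exactly the expression in (\ref{200123A}). Strictness of the bound $n^2C/D<B(\psi)$ follows either from strict monotonicity (the limit is $\le g(s_0)<g(0)=B(\psi)$ for any fixed $s_0>0$) or, equivalently and directly, from $n^2C/D<nA/V\Longleftrightarrow nVC-AD<0\Longleftrightarrow Q(v)<0$. Finally, Theorem A applied to each vector $tv$ gives $\lambda_1\le b(\psi,tv)=f(t)$ for every $t\ge 0$, and letting $t\to\infty$ yields $\lambda_1\le\lim_{t\to\infty}f(t)$, completing (\ref{200123A}). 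The computation is routine throughout; the only step demanding a little care is deducing $D>0$ (equivalently $v^\top\neq 0$) from $Q(v)<0$, since it is precisely this fact that makes the limit finite and renders the whole statement non-vacuous.
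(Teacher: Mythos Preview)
Your proposal is correct and follows essentially the same route as the paper: the paper's proof consists of the single displayed computation
\[
f'(t)=\frac{2\,Q(v)\,t}{\Bigl(\mathrm{vol}(M^{n})+\dfrac{t^2}{n}\displaystyle\int_{M^n}\|v^{\top}\|^2\,dV\Bigr)^{2}}<0,
\]
which is exactly your $g'(t^2)\cdot 2t$ after undoing the substitution $s=t^2$. Your write-up is in fact more complete than the paper's, since you spell out the argument that $Q(v)<0$ forces $D>0$ (hence $v^{\top}\not\equiv 0$ and the limit is finite), you compute the limit explicitly, and you correctly flag and resolve the evident misprint in the definition of $f$.
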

\begin{proof} 
A direct computation gives
$$
f'(t)= \frac{2\,Q(v)\,t}{\Big(\mathrm{vol}(M^{n}) +\dfrac{t^2}{n} \displaystyle{\int}_{M^n}\| v^{\top}\|^2\, dV\Big)^2}< 0\,,
$$
for all $t\in (0,\infty)$ which shows the result. 
\end{proof}

\section{Examples: Embedded tori in $\R^3$ }
\noindent Fix a real number $R>1$,  and consider the revolution torus $\T_{R}$ obtained when the profile curve is the circle in the plane $x=0$ of center $(0,R,0)$ with a radius of $1$. First of all, we have
\begin{proposition}\label{031123A}
  The quadratic form $Q_{R}$ corresponding to $\T_{R}$ is given by
  
  \begin{equation}\label{311023A}
      Q_{R}(v)= \pi^{4}R^{3}\Big(\dfrac{4R^2-3}{\sqrt{R^2-1}}-4R\Big)\Big(v_{1}^2+v_{2}^{2}-2v_{3}^2\Big),
  \end{equation}
  where $v=(v_1,v_2,v_3)\in\R^3$.
\end{proposition}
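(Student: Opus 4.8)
The plan is to use the standard parametrization of the revolution torus and to reduce each of the three integrals appearing in the quadratic form (\ref{quadratic}) to elementary trigonometric integrals. I would start from
\[
\psi(u,w)=\big((R+\cos u)\cos w,\,(R+\cos u)\sin w,\,\sin u\big),\qquad (u,w)\in[0,2\pi)^2,
\]
whose induced metric is $du^2+(R+\cos u)^2\,dw^2$; in particular the canonical measure is $dV=(R+\cos u)\,du\,dw$ and $\mathrm{vol}(\T_R)=4\pi^2R$. A direct computation gives the unit normal $N=(\cos u\cos w,\cos u\sin w,\sin u)$ and, via the second fundamental form, the mean curvature vector $\mathbf{H}=-\tfrac{R+2\cos u}{2(R+\cos u)}\,N$. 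Consequently $\|\mathbf{H}\|^2=\tfrac{(R+2\cos u)^2}{4(R+\cos u)^2}$, and writing $v=(v_1,v_2,v_3)$, both $\langle\mathbf{H},v\rangle^2$ and $\|v^\top\|^2=\|v\|^2-\langle N,v\rangle^2$ are expressed through $\langle N,v\rangle^2$.

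Next I would evaluate the integrals $\int\|\mathbf{H}\|^2\,dV$, $\int\langle\mathbf{H},v\rangle^2\,dV$ and $\int\|v^\top\|^2\,dV$ by iterated integration, performing the $w$-integral first. The key simplification is that $\int_0^{2\pi}\langle N,v\rangle^2\,dw=\pi\cos^2u\,(v_1^2+v_2^2)+2\pi\sin^2u\,v_3^2$: all mixed products $v_iv_j$ with $i\neq j$ integrate to zero, so every resulting quadratic form is diagonal and depends only on $v_1^2+v_2^2$ and $v_3^2$. The remaining $u$-integrals are handled by the polynomial division $\tfrac{(R+2\cos u)^2}{R+\cos u}=4\cos u+\tfrac{R^2}{R+\cos u}$ (and the analogous reductions of $\tfrac{\cos^2u}{R+\cos u}$ and $\tfrac{\sin^2u}{R+\cos u}$), which turn everything into multiples of $\int_0^{2\pi}\tfrac{du}{R+\cos u}=\tfrac{2\pi}{\sqrt{R^2-1}}$ together with integrals of trigonometric monomials that vanish or yield $\pi,2\pi$. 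This produces $\int\|\mathbf{H}\|^2\,dV=\tfrac{\pi^2R^2}{\sqrt{R^2-1}}$, $\int\|v^\top\|^2\,dV=\pi^2R\big(3(v_1^2+v_2^2)+2v_3^2\big)$, and a similar closed expression for $\int\langle\mathbf{H},v\rangle^2\,dV$ with separate $(v_1^2+v_2^2)$ and $v_3^2$ coefficients.

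Finally I would substitute these three expressions into (\ref{quadratic}) with $n=2$ and factor out $\pi^4R^3$. The main obstacle --- really the only nontrivial point --- is the algebraic verification that, after this substitution, the coefficient of $v_3^2$ is exactly $-2$ times the coefficient of $v_1^2+v_2^2$. The latter coefficient works out to $\tfrac{4R^2-3}{\sqrt{R^2-1}}-4R$, and confirming the proportionality amounts to checking the identity $8R-8\sqrt{R^2-1}-\tfrac{2}{\sqrt{R^2-1}}=-2\big(\tfrac{4R^2-3}{\sqrt{R^2-1}}-4R\big)$, which follows at once by placing the left-hand side over the common denominator $\sqrt{R^2-1}$. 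This forced proportionality is what collapses the two independent diagonal entries into the single factor $v_1^2+v_2^2-2v_3^2$, and assembling the common scalar $\pi^4R^3\big(\tfrac{4R^2-3}{\sqrt{R^2-1}}-4R\big)$ yields formula (\ref{311023A}).
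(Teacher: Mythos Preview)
Your proposal is correct and follows essentially the same route as the paper: the same parametrization, the same normal and mean curvature (your $-\tfrac{R+2\cos u}{2(R+\cos u)}$ is exactly the paper's $-\tfrac12\big(1+\tfrac{\cos u}{R+\cos u}\big)$), the same reduction of the three integrals via $\int_0^{2\pi}\tfrac{du}{R+\cos u}=\tfrac{2\pi}{\sqrt{R^2-1}}$, and the same final substitution into $Q$. The only difference is cosmetic: you spell out the final algebraic check that the $v_3^2$ coefficient equals $-2$ times the $v_1^2+v_2^2$ coefficient, which the paper leaves implicit.
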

\begin{proof}
  A local parametrization of $\T_{R}$ is
$$
\mathbf{x}(u,v)=\Big((R+ \cos u)\cos v, (R+\cos u)\sin v,  \sin u \Big), \quad (u,v)\in U=]0, 2\pi[\times ]0, 2\pi[,
$$
which covers a dense open subset $\mathbf{x}(U) \subset \T_{R}$.
The corresponding unit normal vector field is
$$
\mathbf{N}(u,v)=\Big( \cos v \cos u, \sin v \cos u, \sin u \Big),
$$
and the coefficients of the first and second fundamental forms, $E,F,G$ and $e,f,g$ respectively, satisfy
$$
E=1, \quad F=0, \quad G=(R+ \cos u)^2
$$
$$
e=-1,\quad f=0, \quad g=-(R+ \cos u)\cos u\,.
$$
Therefore, we obtain $\mathrm{area}(\T_{R})=4\pi^2 R$, and for the mean curvature function the following expression 
$$
H=\frac{1}{2}\frac{e\,G-2f\,F+g\,E}{EG-F^2}=-\frac{1}{2}\Big(1+ \frac{\cos u}{R+ \cos u}\Big).
$$
Hence, we have
\begin{equation}\label{101023A}
\int_{\T_{R}}\| \mathbf{H}\|^{2}\, dA=\frac{\pi}{2} \Big(2\pi R + \int_{0}^{2\pi}\frac{\cos^2 u}{R+ \cos u} du \Big),
\end{equation}
where $dA$ is the canonical measure on $\T_{R}$.
From (\ref{101023A}) and the  formula 
\begin{equation}\label{101023C}
    \int_{0}^{2\pi}\frac{\cos^2 u}{R+ \cos u} du=2\pi R\Big(\frac{R}{\sqrt{R^2 - 1}}-1\Big),
\end{equation}
we get, for the mean curvature vector field $
\mathbf{H}= H \mathbf{N}
$,
\begin{equation}\label{101023B}
    \int_{\T_{R}}\| \mathbf{H}\|^{2}\, dA=\frac{\pi^2 R^2}{\sqrt{R^2-1}}.
\end{equation}

\smallskip

Now, for  every $v\in \R^3$, we compute
$$
\langle \mathbf{H}, v \rangle^{2}=\frac{1}{4}\Big(1+\frac{\cos u}{R+ \cos u}\Big)^2\Big( v_{1}\cos v \cos u +v_{2}\sin v \cos u+ v_{3} \sin u \Big)^2,
$$
and therefore, 
$$
\int_{\T_{R}}\langle \mathbf{H}, v\rangle^2\, dA=
\frac{\pi}{4}\Big[(\|v\|^{2}+v_{3}^2)R\pi+(v_{1}^2+v_{2}^{2} )\int_{0}^{2\pi}\frac{\cos^4 u}{R+ \cos u} du +2v_{3}^2\int_{0}^{2\pi}\frac{\cos^2 u \sin^2 u}{R+ \cos u} du\Big].
$$
On the other hand, taking into account
$$
\int_{0}^{2\pi}\frac{\cos^4 u }{R+ \cos u} du=\pi R \Big[ 2R^2\Big(\frac{R}{\sqrt{R^2 - 1}}-1\Big)-1\Big],
$$
and (\ref{101023C}), we have
\begin{equation}\label{101023D}
    \int_{\T_{R}}\langle \mathbf{H}, v\rangle^2\, dA=\frac{\pi^{2}R^2}{2}\Big[R\Big(\frac{R}{\sqrt{R^2 - 1}}-1\Big)(v_{1}^2+v_{2}^{2})+2\Big(R-\sqrt{R^2-1}\Big)v_{3}^{2} \Big].
\end{equation}

Lastly, we decompose
$
v=v^{\top}+ v^{\perp}= v^{\top}+ h \mathbf{N},
$
where 
$$
h= \langle \mathbf{N}, v\rangle=v_{1}\cos v \cos u + v_{2}\sin v \cos u + v_{3} \sin u,
$$
and then, we obtain 
$$
v^{\top}=v-(v_{1}\cos v \cos u + v_{2}\sin v \cos u + v_{3} \sin u) \Big( \cos v \cos u, \sin v \cos u, \sin u \Big).
$$
Therefore, we get
$$
\|v^{\top}\|^{2}=\|v\|^2- (v_{1}\cos v \cos u + v_{2}\sin v \cos u + v_{3} \sin u)^2
$$
and a direct computation shows.

\begin{equation}\label{101023E}
    \int_{\T_{R}}\|v^{\top}\|^{2}\,dA=\pi^2 R(3 \|v\|^{2}- v_{3}^2).
\end{equation}

 Once we have computed the ingredients of the quadratic form $Q_{R}$ in (\ref{101023B}), (\ref{101023D}) and (\ref{101023E}), we end the proof.

\end{proof}

Note that, from (\ref{101023B}), the Reilly upper bound for the torus $\T_R$ is 
\begin{equation}\label{161123A}
 B(\T_R)=\frac{R}{2 \sqrt{R^2 -1}}.   
\end{equation}

\smallskip

Proposition \ref{031123A} identifies three qualitatively different behaviors for the quadratic form $Q_{R}$ corresponding to the conditions $R^{2} > 9/8$, $R^2 > 9/8$, and $R^2  =9/8$. In each of these cases, we select a specific torus and  compute our upper bound for $\lambda_1$, showing 
its improvement over the corresponding Reilly bounds in the first and the second cases.

\bigskip

\smallskip

\textbf{Case $R^2= 2$.}  The quadratic form is  
$$
 Q_{\sqrt{2}}(v)=2\pi^{4}(5\sqrt{2}-8)\Big(v_{1}^{2}+ v_{2}^{2}-2v_{3}^2\Big)
 $$
 for all $v\in \R^3$.

 We make first the choice  $v\in \S^{2}$ with $v_{3}^2<1/3$. In this case,  $Q_{\sqrt{2}}(v)<0$, and therefore from
  Corollary  \ref{250923B}, we have 
\begin{equation}\label{171123A}
   \lambda_{1}(\T_{\sqrt{2}})\leq \dfrac{4(\sqrt{2}-1) + v_{3}^2(12-8 \sqrt{2})}{3- v_{3}^2}. 
\end{equation}
 Now, if $v\in \S^{2}$ satisfies $v_{3}^{2}\geq  1/3$, we have  $Q_{\sqrt{2}}(v)\geq  0$ and then, Lemma \ref{250923A} (2) gives 
\begin{equation}\label{171123B}
  \lambda_{1}(\T_{\sqrt{2}})\leq \frac{\sqrt{2}}{2} -\frac{(5\sqrt{2}-8)(1-3v_{3}^2)}{10+2v_{3}^{2}}.
 \end{equation}
Note that (\ref{171123B}) for $v_{3}^2= 1/3$ is nothing but inequality (Re), namely $\lambda_{1}(\T_{\sqrt{2}})< \sqrt{2}/2 \approx 0.707106$, and for the choice $v_{3}^2 >1/3$, we have strict inequality in (\ref{171123B}).  
Observe that the minimum value of the right side  in (\ref{171123A}) under $v_{3}^2 <1/3$ (resp. in (\ref{171123B}) under $v_{3}^2 >1/3$) is attained at each $v\in \S^2$ satisfying $v_{3}=0$ (resp. $v^2_{3}=1$) and both provide
$$
\lambda_{1}(\T_{\sqrt{2}})< \frac{4}{3}(\sqrt{2}-1)\approx 0.552284.
$$

\smallskip

\noindent  We point out that the torus $\T_{\sqrt{2}}$ satisfies $\int_{\T_{\sqrt{2}}}\|\mathbf{H}\|^{2}\, dA= 2\pi^2$
from (\ref{101023B}). Thus, this torus achieves the lower bound preset in that Willmore conjectured in 1965 \cite{Will}, positively solved  in \cite{MN}.

\smallskip

\textbf{Case $R^2= 17/16$.}
 The quadratic form is 
$$
 Q_{\sqrt{17}/4}(v)=\frac{17}{64}\pi^4(5\sqrt{17}-17)\Big(v_{1}^{2}+ v_{2}^{2}-2v_{3}^2\Big)
 $$
 for all $v\in \R^3$.

 We make first the choice  $v\in \S^2$ with $v_{3}^2> 1/3$ we have  $Q_{\sqrt{17}/4}(v)<0$ and then from
  Corollary  \ref{250923B}, we have
\begin{equation}\label{171123C}
  \lambda_{1}(\T_{\sqrt{17}/4})\leq \frac{17 (\sqrt{17}-1)+ v_{3}^{2}(51-19 \sqrt{17})}{24-8v_{3}^2}.  
\end{equation}
Now, if $v\in \S^2$ satisfies $v_{3}^{2}\leq 1/3$, we have  $Q_{\sqrt{17}/4}(v)\geq  0$ and then, Lemma \ref{250923A} (2) gives 
\begin{equation}\label{171123D}
   \lambda_{1}(\T_{\sqrt{17}/4})\leq   \frac{\sqrt{17}}{2} -\frac{(5 \sqrt{17}-17)(1-3v_{3}^2)}{8(5+v_{3}^{2})}. 
\end{equation}
  As in the previous case, for $v^{2}_{3}=1/3$, inequality (\ref{171123D}) is nothing but  (Re), namely $\lambda_{1}(\T_{\sqrt{17}/4})<\frac{\sqrt{17}}{2}\approx 2.061552$, and for the choice $v_{3}^2 <1/3$, we have strict inequality in (\ref{171123D}). The small bound in  (\ref{171123C}) (resp. in (\ref {171123D})) is
  $
  \frac{17- \sqrt{17}}{8}
  $
  (resp. $\frac{\sqrt{17}}{2}- \frac{5 \sqrt{17}- 17}{40}$)
and it is achieved for $v_{3}^{2}=1$ (resp. for $v_{3}=0$). Therefore, we can assert 
$$
 \lambda_{1}(\T_{\sqrt{17}/4})\leq  \frac{17-\sqrt{17}}{8}\approx 1.609611.
$$

\smallskip

\textbf{ Case $R^2= 9/8$.} In this last situation, the quadratic form $Q_R$ identically vanishes and then,
all our bounds agree with  Reilly's one, that is
$$
\lambda_{1}(\T_{\frac{3}{2\sqrt{2}}})<\frac{\sqrt{2}}{2}.
$$

 \vspace{6mm}

\noindent {\bf Acknowledgment}
The first author was partially
supported by Spanish MICINN project PID2020-118452GB-I00. The second  named author was partially supported by the Spanish MICINN and ERDF project PID2020-116126GB-I00. Research partially supported by the ``Mar\'{\i}a de Maeztu'' Excellence Unit IMAG, reference CEX2020-001105-M, funded by
MCIN-AEI-10.13039-501100011033.

\end{document}